\numberwithin{equation}{section}
\numberwithin{figure}{section}
\theoremstyle{plain}
\newtheorem{thm}{\protect\theoremname}
  \theoremstyle{plain}
  \newtheorem{fact}[thm]{\protect\factname}
  \theoremstyle{plain}
  \newtheorem{lem}[thm]{\protect\lemmaname}
  \theoremstyle{plain}
  \newtheorem{prop}[thm]{\protect\propositionname}
  \theoremstyle{remark}
  \newtheorem{rem}[thm]{\protect\remarkname}
\newcommand{\ra}{\rightarrow}
\newcommand{\cG}{{\mathcal G}}
\newcommand{\cO}{{\mathcal O}}
\newcommand{\bN}{{\mathbb N}}
\newcommand{\bR}{{\mathbb R}}
\newcommand{\bC}{{\mathbb C}}
\newcommand{\bZ}{{\mathbb Z}}
\newcommand{\bH}{{\mathbb H}}
\newcommand{\norm}[1]{\left\|#1\right\|}
\newcommand{\set}[1]{\left\{#1\right\}}
\newcommand{\pa}[1]{\left(#1\right)}
\newcommand{\av}[1]{\left|#1\right|}
  \providecommand{\factname}{Fact}
  \providecommand{\lemmaname}{Lemma}
  \providecommand{\propositionname}{Proposition}
  \providecommand{\remarkname}{Remark}
\providecommand{\theoremname}{Theorem}
\begin{document}

\title{Duke's Theorem for subcollections}

\author{Menny Aka}

\author{Manfred Einsiedler}
\begin{abstract}
We combine effective mixing and Duke's Theorem on closed geodesics
on the modular surface to show that certain subcollections of the
collection of geodesics with a given discriminant still equidistribute.
These subcollections are only assumed to have sufficiently large total
length without any further restrictions.
\end{abstract}

\address{Section de mathématiques\\
EPFL\\
Station 8 - Bât. MA\\
CH-1015 Lausanne\\
Switzerland }

\email{menashe-hai.akkaginosar@epfl.ch}

\address{Departement Mathematik\\
ETH Zürich\\
Rämistrasse 101\\
8092 Zürich\\
Switzerland}

\email{manfred.einsiedler@math.ethz.ch}

\maketitle

\section{Introduction}

Duke's Theorem, in our context, is concerned with the equidistribution
of closed geodesics on the modular surface $Y_{0}(1):={\rm SL}_{2}(\bZ)\setminus\bH$
(and its unit tangent bundle). To give the necessary background and
its statement, we follow the introduction of \cite{ELMV}. The reader
is referred to there for the definitions of the classical notions
that we use below.

A non-zero integer $d$ is called a  \textit{discriminant} if there
exist $a,b,c\in\bZ$ such that $d=b^{2}-4ac$. For any non-square
positive discriminant $d$ one can associate (see \cite[\S 1.2]{ELMV})
a collection $\cG_{d}$ of $h(d)$ closed geodesics on $X:=T^{1}(Y_{0}(1))\cong{\rm SL}_{2}(\bZ)\setminus{\rm SL}_{2}(\bR)$,
the unit tangent of the modular surface, where $h(d)$ is the class
number of the order $\cO_{d}:=\bZ[\frac{d+\sqrt{d}}{2}]$ (see \cite[\S 2.1]{ELMV}).
Duke's Theorem asserts that the set $\cG_{d}$ becomes equidistributed
as $d\ra+\infty$ amongst positive non-square discriminants. The aim
of this paper is to deduce a similar theorem for subcollections of
$\cG_{d}$ of sufficiently large total length without any further
restrictions. In order to give a precise formulation and relate this
work to previous results we first record the following facts:
\begin{fact}
\label{thm:Fact on duke ingridents}Let $d$ be a positive non-square
discriminant and $\cO_{d}:=\bZ[\frac{d+\sqrt{d}}{2}]$ be the order
of discriminant $d$. We have:\end{fact}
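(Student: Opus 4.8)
The plan is to assemble the statements collected in Fact~\ref{thm:Fact on duke ingridents} from the classical dictionary relating closed geodesics, ideal classes and binary quadratic forms, fed by two analytic inputs: Dirichlet's class number formula and Siegel's lower bound for $L(1,\chi_d)$. None of the steps requires an argument of my own beyond bookkeeping; the content lies in correctly quoting the correspondence of \cite[\S 2]{ELMV} and the two number-theoretic black boxes.

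First I would recall the parametrisation from \cite[\S 2]{ELMV}: the closed geodesics in $\cG_d$ are indexed by the ideal classes of the order $\cO_d$ (equivalently by the $\mathrm{SL}_2(\bZ)$-equivalence classes of primitive indefinite binary quadratic forms of discriminant $d$), which gives the count $h(d)$. Each such geodesic is a primitive periodic orbit of the geodesic flow $a_t=\mathrm{diag}(e^{t/2},e^{-t/2})$ on $X$, and a short computation with a hyperbolic generator of trace $t_d$ shows that its length equals $2\log\epsilon_d$, where $\epsilon_d=\tfrac{t_d+u_d\sqrt d}{2}>1$ is the fundamental solution of the Pell equation $t^2-du^2=4$ (the larger eigenvalue of the generator). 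Summing over the $h(d)$ classes, the total length of $\cG_d$ is $2\,h(d)\log\epsilon_d$.

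Second, to control this quantity I would invoke Dirichlet's class number formula, which for $\cO_d$ gives $h(d)\log\epsilon_d=\tfrac12\sqrt d\,L(1,\chi_d)$ (up to the standard conductor correction when $d$ is not fundamental), so that the total length equals $\sqrt d\,L(1,\chi_d)$. The elementary bound $L(1,\chi_d)\ll\log d$ together with Siegel's ineffective lower bound $L(1,\chi_d)\gg_\varepsilon d^{-\varepsilon}$ then pin the total length to the range $d^{1/2-\varepsilon}\ll_\varepsilon\operatorname{len}(\cG_d)\ll\sqrt d\,\log d$, i.e. it is $d^{1/2+o(1)}$. I expect the only delicate point to be the ineffectivity of Siegel's bound, which I would simply flag, since it is harmless for the qualitative assertions; the genuinely deep ingredient---Duke's equidistribution of $\cG_d$ as $d\to+\infty$, resting on subconvexity for the relevant $L$-functions---I would quote verbatim from \cite{ELMV} rather than reprove.
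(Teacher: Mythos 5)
Your proposal is correct and takes essentially the same route as the paper: the paper's entire proof of Fact~\ref{thm:Fact on duke ingridents} is the citation to \cite[\S 2]{ELMV}, and your sketch (the geodesics--ideal-classes correspondence, the length computation via a hyperbolic generator, Dirichlet's class number formula, the upper bound $L(1,\chi_{d})\ll\log d$, and Siegel's ineffective lower bound) is precisely the standard content of that reference, correctly flagging the ineffectivity as harmless for the $d^{\frac{1}{2}+o(1)}$ assertion. The only bookkeeping caveat is the factor-of-two ambiguity between $2\log\epsilon_{d}$ and ${\rm Reg}(\cO_{d})$ (depending on the existence of a norm $-1$ unit and on surface versus unit tangent bundle), which the paper itself elides and which is absorbed into the $o(1)$ in any case.
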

\begin{enumerate}
\item $\av{\cG_{d}}=\av{{\rm Pic}(\cO_{d})}$ where ${\rm Pic}(\cO_{d})$
is the ideal class group of $\cO_{d}$.
\item The length of any $\phi\in\cG_{d}$ is equal to ${\rm Reg}(\cO_{d})$,
the regulator of $\cO_{d}$.%

\item The total length of the collection $\cG_{d}$ is ${\rm Reg}(\cO_{d})\cdot\av{\cG_{d}}=d^{\frac{1}{2}+o(1)}$\label{enum: total length}.\end{enumerate}
\begin{proof}
See \cite[ \S2]{ELMV}.
\end{proof}
Let $G={\rm SL_{2}}(\bR)$, $\Gamma$ be a finite-index congruence
subgroup of ${\rm SL_{2}}(\bZ)$ and let $G$ act on $X=\Gamma\setminus G$
by $g.\Gamma x=\Gamma xg^{-1}$. Let $C_{c}^{\infty}(X)$ denote the
space of infinitely differentiable functions with compact support
on $X$ and $\mu_{X}$ denote the unique $G$-invariant probability
measure on $X$$ $. Throughout this paper, given a measure $\nu$
on a measurable space $X$ and a $\nu$-measurable function $f$,
we set $\nu(f):=\int_{X}fd\nu$. For a closed geodesic $\phi$, we
denote its length by $l(\phi)$ and let $l(I_{d}):=\sum_{\phi\in I_{d}}l(\phi)$.
By Fact \ref{thm:Fact on duke ingridents}, we have $l(I_{d})=\av{I_{d}}Reg(\cO_{d})$.
We let $\mu_{\phi}$ denote the normalized arc-length measure along
$\phi$, that is, for $f\in C_{c}^{\infty}(X)$ we set 
\[
\mu_{\phi}(f)=\frac{1}{l(\phi)}\int_{0}^{l(\phi)}f(a_{t}x)dt,\text{ for some }x\in\phi,
\]
where $a_{t}$ denote the geodesic flow (see $\S$\ref{sec:Preliminaries}).
For any $I_{d}\subseteq\cG_{d}$ let $\mu_{I_{d}}:=\frac{1}{\av{I_{d}}}\sum_{\phi\in I_{d}}\mu_{\phi}$
denote the normalized measure supported on $I_{d}$ and $\mu_{d}:=\mu_{\cG_{d}}$.
Finally, given a sequence of subcollections $\mathcal{I}=\set{I_{d_{k}}}$
we let $\varphi_{\mathcal{I}}(k)=\frac{l(\cG_{d_{k}})}{l(I_{d_{k}})}$.
In this note, we prove the following:
\begin{thm}
\label{thm:subcollection Duke}Let $\mathcal{I}=\set{I_{d_{k}}}$
be a sequence of subcollections such that $\psi(k):=\frac{\varphi_{\mathcal{I}}(k)}{\log(d_{k})}$
tends to $0$ as $k\ra\infty$. Then, for any $f\in C_{c}^{\infty}(X)$
we have 
\[
\av{\mu_{I_{d_{k}}}(f)-\mu_{X}(f)}\leq C(f)\psi(k)^{\frac{1}{2}}
\]
where $C$ is a constant depending only on $f$. In particular, \textup{$\mu_{I_{d_{k}}}$
equidistribute to $\mu_{X}$}.
\end{thm}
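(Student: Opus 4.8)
The plan is to combine Duke's Theorem (equidistribution of the full collection $\cG_{d_k}$) with effective mixing of the geodesic flow. The key idea is that a subcollection $I_d$ with large total length "sees" enough of the full collection dynamics that a short-time average along its geodesics is essentially indistinguishable from the full average. More precisely, I would begin by reducing the problem to a single quantitative estimate: since $\mu_d \to \mu_X$ by Duke's Theorem, it suffices to control the difference $\mu_{I_d}(f) - \mu_d(f)$, or better, to show directly that averaging $f$ along the subcollection is close to $\mu_X(f)$ up to an error governed by $\psi(k)$.

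\medskip

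The main technical device I would use is a \emph{thickening} or \emph{smoothing} argument via the mixing property. The idea is to replace the normalized arc-length measure $\mu_\phi$ along each geodesic by a measure that has been flowed and averaged over a small transverse neighborhood. Concretely, I would fix a bump function supported on a small ball in $G$ and convolve: set $f_\epsilon = f * \chi_\epsilon$ where $\chi_\epsilon$ is an approximate identity of width $\epsilon$. Then $\mu_\phi(f) \approx \mu_\phi(f_\epsilon)$ with an error controlled by $\epsilon \norm{f}_{\mathrm{Lip}}$ (or a Sobolev norm), uniformly in $\phi$. The advantage of $f_\epsilon$ is that its average along a closed geodesic of length $l(\phi)$ can be compared, via effective equidistribution of long closed geodesics or directly via the spectral gap, to $\mu_X(f_\epsilon)$ with an error that decays like $e^{-\delta l(\phi)}$ or like a negative power of $l(\phi)$. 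Summing over $\phi \in I_d$ and using that $l(I_d)$ is large (equivalently $\varphi_{\mathcal I}(k)$ is small relative to $\log d_k$), one balances the two error terms. Optimizing the choice of $\epsilon$ against the mixing rate and the total length is what produces the exponent $\tfrac12$ in $\psi(k)^{1/2}$: one expects a bound of the shape $\epsilon^{-a} e^{-\delta \,\mathrm{(length)}} + \epsilon$, and choosing $\epsilon$ to equalize the two terms with the length scale set by $\varphi_{\mathcal I}(k)$ yields the square-root saving.

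\medskip

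The crucial input that distinguishes the subcollection case from Duke's Theorem itself is that I cannot use the algebraic/Hecke structure that underlies Duke's proof, since an arbitrary subcollection need not be invariant under any useful group action. Instead, the leverage must come purely from the dynamics: each individual closed geodesic in $\cG_{d_k}$ has length $\mathrm{Reg}(\cO_{d_k})$, and by Fact~\ref{thm:Fact on duke ingridents}(\ref{enum: total length}) the \emph{number} of geodesics times this regulator is $d_k^{1/2+o(1)}$. The hypothesis $\psi(k) \to 0$ says precisely that $l(I_{d_k})$ grows faster than $\log(d_k)$ times the full length, i.e.\ the subcollection captures a total length that is large compared to the mixing time scale $\log d_k$. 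I would therefore phrase the effective mixing statement as: for the smoothed function $f_\epsilon$, the discrepancy of its average over any single geodesic of length $L$ is $O(\epsilon^{-a} e^{-\delta L})$, so that summing and normalizing by $l(I_d)$ gives total error $O\!\big(\epsilon^{-a} e^{-\delta L} + \epsilon\big)$ with $L$ comparable to $\log(d_k)/\varphi_{\mathcal I}(k)^{-1}$.

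\medskip

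The hard part will be making the mixing-to-individual-geodesic estimate genuinely effective and uniform over all geodesics in the collection simultaneously, and in particular extracting a clean exponential rate in the length $L$ from the spectral gap of $\Gamma \backslash G$. One must be careful that the error per geodesic depends only on $l(\phi)$ and a Sobolev norm of $f_\epsilon$, and not on any finer arithmetic data of $\phi$, so that the sum over the subcollection telescopes into a bound depending only on $l(I_{d_k})$ and $\epsilon$. Once that uniform effective estimate is in place, the remaining work is the elementary optimization over $\epsilon$ described above, together with invoking Fact~\ref{thm:Fact on duke ingridents} to convert length data into the quantity $\psi(k)$, yielding the stated bound $C(f)\psi(k)^{1/2}$.
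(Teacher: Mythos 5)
Your overall framing---combine effective Duke with effective mixing, work at a time scale $T\asymp\log d_k$, and optimize---matches the paper's strategy in outline, but the central estimate your argument rests on is false, and no amount of smoothing can repair it. You claim that (after convolving $f$ with an approximate identity) the average of $f_\epsilon$ over \emph{any single} closed geodesic of length $L$ is within $O(\epsilon^{-a}e^{-\delta L})$ of $\mu_X(f_\epsilon)$, uniformly, with an error ``depending only on $l(\phi)$ and a Sobolev norm of $f_\epsilon$, and not on any finer arithmetic data of $\phi$''. The spectral gap gives no such statement: decay of matrix coefficients controls correlations $\av{\langle f,f\circ a_t\rangle}$ and hence says that the ergodic average $f_T$ is close to the mean \emph{in $L^{2}(\mu_X)$-norm} (this is exactly Proposition \ref{prop:estimating f_T^2}), not along any particular orbit. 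Individual long closed geodesics need not equidistribute at all: Section \ref{sec:Elon's-construction} of this paper builds, for discriminants with ${\rm Reg}(\cO_{d_{k}})\ll\log d_{k}$, closed geodesics of length $\asymp\log d_{k}$ that spend a fixed positive proportion of their length near a prescribed periodic orbit $P$; were your per-geodesic bound true, Theorem \ref{thm:Elon's example} would be impossible and Duke's Theorem itself would be superfluous, since every long geodesic would equidistribute on its own. Note also that every $\phi\in\cG_{d}$ has length exactly ${\rm Reg}(\cO_{d})$, which is independent of the subcollection and can be as small as $\asymp\log d$, so your phrase ``$L$ comparable to $\log(d_k)/\varphi_{\mathcal{I}}(k)^{-1}$'' has no meaning: the hypothesis $\psi(k)\ra0$ constrains the \emph{total} length $l(I_{d_{k}})$, i.e.\ how many geodesics are retained, not the length of any individual one.

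The idea you are missing is how to transfer the $L^{2}(\mu_X)$ mixing bound to an arbitrary subcollection in the absence of any per-orbit information: the paper does this by positivity and domination rather than orbit-by-orbit estimates. Writing $c=\mu_X(f)$ and using flow-invariance of $\mu_{I_{k}}$, one has $\mu_{I_{k}}(f)-c=\mu_{I_{k}}\pa{\pa{f-c}_{T}}$; Cauchy--Schwarz bounds the square of this by $\mu_{I_{k}}\pa{\av{\pa{f-c}_{T}}^{2}}$, and since $\av{\pa{f-c}_{T}}^{2}\geq0$ and $l(\cG_{d_{k}})\,\mu_{d_{k}}$ dominates $l(I_{k})\,\mu_{I_{k}}$ as measures, one gets $\frac{l(I_{k})}{l(\cG_{d_{k}})}\mu_{I_{k}}\pa{\av{\pa{f-c}_{T}}^{2}}\leq\mu_{d_{k}}\pa{\av{\pa{f-c}_{T}}^{2}}$. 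Effective Duke (Theorem \ref{thm:effective duke}) is then applied to the single test function $\av{\pa{f-c}_{T}}^{2}$ for the \emph{full} collection, effective mixing (Proposition \ref{prop:estimating f_T^2}) gives $\mu_X\pa{\av{\pa{f-c}_{T}}^{2}}\ll S_{1}(f-c)^{2}/T$, and choosing $T=\eta\log d_{k}$ with $\eta<\gamma/\alpha$ beats the Sobolev growth $S_{2}\pa{\av{\pa{f-c}_{T}}^{2}}\ll S_{3}(f)^{2}e^{\alpha T}$ of Lemma \ref{Lem:square estimate}; multiplying back by $\varphi_{\mathcal{I}}(k)$ yields $\pa{\mu_{I_{k}}(f)-\mu_X(f)}^{2}\ll_{f}\psi(k)$. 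In particular the exponent $\frac{1}{2}$ comes from Cauchy--Schwarz, not from optimizing a smoothing parameter; no convolution $f*\chi_\epsilon$ is needed, as the Sobolev norms absorb all regularity issues.
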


Note that the only assumption on $I_{d}$ is about its total length.
Theorem \ref{thm:subcollection Duke} answers a question raised in
\cite[Remark 6.1.]{MV2006ICM}. In order to discuss a stronger variant
of Theorem \ref{thm:subcollection Duke} and to put Theorem \ref{thm:subcollection Duke}
in the context of remark \cite[Remark 6.1.]{MV2006ICM}, one should
contrast Theorem \ref{thm:subcollection Duke} with the results in
\cite{Popa2006,MH2006}. To explain these results, note that after
choosing a base point, $\cG_{d}$ inherits a structure from ${\rm Pic}(\cO_{d})$
(i.e. $\cG_{d}$ is a ${\rm Pic}(\cO_{d})$-torsor, see \cite[\S 2]{ELMV}
). In \cite{Popa2006,MH2006,HarcosThesis}, the authors establish
the equidistribution of subcollections that correspond to subgroups
of $H_{d}<{\rm Pic}(\cO_{d})$ with $[{\rm Pic}(\cO_{d}):H_{d}]\gg d^{a}$
for some $a<\frac{1}{2827}$. In other words, they establish equidistribution
of much smaller subcollections which are restricted by some \textquotedbl{}algebraic\textquotedbl{}
condition. We note that these results, do not imply Theorem \ref{thm:subcollection Duke}.
First, in the context of Heegner points (which is the framework of
the result in \cite{MH2006}), Theorem \ref{thm:subcollection Duke}
is clearly false for arbitrary subcollections. Indeed, restricting
to points which lie in a certain part of positive measure of the total
space, yields subcollections $I_{d}$ with $\av{I_{d}}\geq C\av{\cG_{d}}$,
for some $0<C<1$ which do not equidistribute. Moreover, in the context
of closed geodesics, arbitrary subcollections with $\frac{l(\cG_{d})}{l(I_{d})}\ll d{}^{a}$
for some $a>0$ do not necessarily equidistribute: following a construction
that was outlined to us by Elon Lindenstrauss, for any $a>0$ we construct
in Section \ref{sec:Elon's-construction} subcollections with $\frac{l(\cG_{d})}{l(I_{d})}\ll d^{a}$
which do not equidistribute, and in fact give positive mass to an
arbitrary fixed periodic orbit. This construction uses subcollections
of $\cG_{d}$ for which ${\rm Reg}(\cO_{d})=c\log(d)$. While writing
this note we found that in an upcoming preprint \cite{BK2014}, Bourgain
and Kontorovich, construct subcollections with $\frac{l(\cG_{d})}{l(I_{d})}\ll d{}^{a}$
that stay uniformly bounded. Moreover, using sieve methods, they manage
to construct uniformly bounded subcollections along a sequence that
involves only fundamental discriminants.

It is an interesting question to decide whether Theorem \ref{thm:subcollection Duke}
holds for smaller subcollections under the assumption ${\rm Reg}(\cO_{d})\gg d^{\epsilon}$
for some $\epsilon>0$. (For a stronger conjecture and a related discussion,
see also \cite[Conjecture 1.9]{ELMVDuke}.)

It is important to note that a stronger, but non-effective, equidistribution
result on subcollections follows from \cite{ELMV}. Indeed, the fact
that $l(\cG_{d})=d^{\frac{1}{2}+o(1)}$ is the only information that
is used in \cite{ELMV} to deduce that the limiting measure has maximal
entropy and hence is equal to $\mu_{X}$. Therefore, the same argument
implies that any subcollections with $l(I_{d})=d^{\frac{1}{2}+o(1)}$
also equidistribute (see mainly \cite[Proposition 3.6]{ELMV}).

Apart from the effectivity in the Theorem \ref{thm:subcollection Duke},
we also remark that the following variant of Theorem \ref{thm:subcollection Duke}
cannot be deduced from \cite{ELMV}, i.e. using the above entropy
argument. Our method uses as input an effective Duke's Theorem, i.e.,
the effective equidistribution of $\cG_{d}$ with $d^{-\gamma}$ savings
for some $\gamma>0$ (see $\S2.2$). Note that by \cite{Popa2006,MH2006,HarcosThesis},
a similar effective Theorem exists for collections $\mathcal{H}_{d}\subset\cG_{d}$
supported on cosets of subgroups $H_{d}<{\rm Pic}(\cO_{d})$ with
$[{\rm Pic}(\cO_{d}):H_{d}]\gg d^{a}$ for some $a<\frac{1}{2827}$,
with $d^{-\gamma(a)}$ savings for some $\gamma(a)>0$ (see \cite[Corollary 1.4]{HarcosThesis}).
Therefore, with the exact same proof, it follows that Theorem \ref{thm:subcollection Duke}
holds for any subcollection $\mathcal{I}=\set{I_{d_{k}}\subset\mathcal{H}_{d_{k}}}$
with $\tilde{\psi}(k):=\frac{\tilde{\varphi}_{\mathcal{I}}(k)}{\log(d_{k})}$
where $\tilde{\varphi}_{\mathcal{I}}(k)=\frac{l(\mathcal{H}_{d_{k}})}{l(I_{d_{k}})}$
instead of $\psi(k)$. 

This note is organized as follows: Theorem \ref{thm:subcollection Duke}
is obtained by a simple application of effective mixing in conjunction
with effective version of Duke's Theorem. In hindsight, a similar
argument is used in \cite{VenkateshSED}. We review these ingredients
in $\S2$ and give the proof of Theorem \ref{thm:subcollection Duke}
in $\S3$. Section $4$ is devoted for the construction of large but
non-equidistributing subcollections as discussed above.

\subsection*{Acknowledgments}

M.A. would like to thank Paul Nelson for fruitful conversations and
patient explanation of the analytic methods behind effective statements
of Duke's Theorem and to Ilya Khayutin for many conversations. We
also want to thank Elon Lindenstrauss for outlining the construction
that appears in $\S4.$ 

M.A. acknowledges the support of ISEF and Advanced Research Grant
228304 from the ERC. While working on this project the authors visited
the IIAS and its hospitality is great acknowledged.

\section{Preliminaries\label{sec:Preliminaries}}

As above, let $G={\rm SL_{2}}(\bR),\, a_{t}:=\pa{\begin{array}{cc}
e^{\frac{t}{2}} & 0\\
0 & e^{-\frac{t}{2}}
\end{array}}$, $\Gamma$ be a finite-index congruence subgroup of ${\rm SL_{2}}(\bZ)$.
Then, $G$ acts on $X=\Gamma\setminus G$ by $g.\Gamma x=\Gamma xg^{-1}$.
Note a left invariant metric $d_{G}$ on $G$ induces a metric on
$X$ which we denote by $d_{X}$ (\cite[\S 9.3.2]{EW10}). It is well
known (see e.g. \cite[\S 9.4.2]{EW10}) that under the identification
of $\Gamma\setminus G\cong T^{1}(Y_{0}(1))$ the action of $a_{t}$
corresponds to the geodesic flow on $X$. We denote by $C_{c}^{\infty}(X)\oplus\bC\subset C^{\infty}(X)$
the space of compactly supported smooth functions modulo the constants
and by $C_{0}^{\infty}(X):=\set{f\in C_{c}^{\infty}(X)\oplus\bC:\int_{X}fd\mu_{X}=0}$.
We define $f_{T}$ by 
\[
f_{T}(x):=\frac{1}{T}\int\limits _{0}^{T}f(a_{t}.x)dt.
\]
Finally, let $0\leq\theta<\frac{1}{2}$ and assume that the unitary
representation of $G$ on $L_{0}^{2}(\Gamma\setminus G)$ does not
weakly contain any complementary series with parameter $\geq\theta$
(for $\Gamma={\rm SL_{2}}(\bZ)$ this holds with $\theta=0$, (the
tempered case)).

\subsection{Effective mixing}
\begin{lem}
\label{lem:Mixing betta} For any $f\in C_{0}^{\infty}(X),\, t\in\bR,\,\epsilon>0$,
we have 
\[
\av{\langle f,a_{t}\circ f\rangle}\ll_{\epsilon}S_{1}(f)^{2}e^{t\pa{\pa{\theta-\frac{1}{2}}+\epsilon}}
\]
where $S_{1}$ is a Sobolev norm.\end{lem}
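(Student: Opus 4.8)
The plan is to read $\langle f,a_{t}\circ f\rangle$ as a diagonal matrix coefficient of the right regular representation $\pi$ of $G={\rm SL}_{2}(\bR)$ on $L_{0}^{2}(\Gamma\setminus G)$ and then to invoke the quantitative decay of matrix coefficients for unitary representations of ${\rm SL}_{2}(\bR)$ having a spectral gap. Since $f\in C_{0}^{\infty}(X)$ satisfies $\mu_{X}(f)=0$, the function $f$ is orthogonal to the constants and hence lies in $L_{0}^{2}(\Gamma\setminus G)$; moreover $a_{t}\circ f=\pi(a_{t})f$, so that $\langle f,a_{t}\circ f\rangle=\langle f,\pi(a_{t})f\rangle$. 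The whole statement thus reduces to a uniform bound on the matrix coefficient $t\mapsto\langle f,\pi(a_{t})f\rangle$ of a single vector.

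The key input I would use is the following form of the Cowling--Haagerup--Howe bound, specialised to ${\rm SL}_{2}(\bR)$ (as carried out, e.g., in \cite{EW10}): under the hypothesis that $\pi$ does not weakly contain complementary series of parameter $\geq\theta$, one has, for smooth vectors $v,w$,
\[
\av{\langle v,\pi(a_{t})w\rangle}\ll S_{1}(v)\,S_{1}(w)\,\Xi(a_{t})^{1-2\theta},
\]
where $\Xi$ denotes the Harish--Chandra spherical function of $G$. I would establish (or cite) this by decomposing $\pi$ as a direct integral of irreducibles: the principal and discrete series constituents are tempered and their matrix coefficients are dominated by $\Xi(a_{t})$, while a complementary constituent of parameter $s<\theta$ contributes $\Xi(a_{t})^{1-2s}\leq\Xi(a_{t})^{1-2\theta}$; the Sobolev norm $S_{1}$ enters precisely to control the sum over the $K={\rm SO}(2)$-types, the weight dependence of the per-$K$-type asymptotics being absorbed by one application of the $K$-Laplacian.

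It then remains to insert $v=w=f$ and to estimate $\Xi(a_{t})^{1-2\theta}$. Using the classical asymptotic $\Xi(a_{t})\asymp(1+\av{t})e^{-\av{t}/2}$ one gets, for $t\geq 0$,
\[
\Xi(a_{t})^{1-2\theta}=(1+t)^{1-2\theta}e^{-t(1-2\theta)/2}\ll_{\epsilon}e^{t((\theta-\frac{1}{2})+\epsilon)},
\]
the polynomial prefactor being absorbed into the arbitrarily small $\epsilon$-loss in the exponent; this yields the claimed bound for $t\geq 0$. For $t<0$ the right-hand side exceeds $S_{1}(f)^{2}$ (the exponent is then positive), so the statement follows from the trivial Cauchy--Schwarz bound $\av{\langle f,\pi(a_{t})f\rangle}\leq\norm{f}_{2}^{2}\leq S_{1}(f)^{2}$, and the proof is complete.

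The main obstacle is the second paragraph, i.e.\ proving the matrix-coefficient estimate with the sharp rate $\Xi(a_{t})^{1-2\theta}$ and with the \emph{first} Sobolev norm sufficing to sum over all $K$-types uniformly: one must verify that the implied constants in the per-$K$-type spherical-function asymptotics grow only polynomially in the weights, so that $S_{1}(f)$ genuinely dominates the resulting weighted $\ell^{2}$-sum. It is exactly the absorption of the $(1+t)^{1-2\theta}$ factor (which for tempered representations is the familiar $(1+t)e^{-t/2}$ behaviour of $\Xi$) that forces the statement to carry an arbitrarily small $\epsilon$ rather than a clean exponential rate.
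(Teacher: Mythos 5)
Your argument is correct and coincides with the paper's approach: the paper proves this lemma simply by citing \cite[Theorem 9.1.2]{VenkateshSED} and \cite[page 216]{HoweTanBook}, and the content of those citations is exactly your Cowling--Haagerup--Howe matrix-coefficient bound $\av{\langle v,\pi(a_{t})w\rangle}\ll S(v)S(w)\Xi(a_{t})^{1-2\theta}$ together with the asymptotic $\Xi(a_{t})\asymp(1+\av t)e^{-\av t/2}$ and absorption of the polynomial factor into the $\epsilon$-loss. Your care about the $K$-type summation is well placed but not load-bearing here, since the paper explicitly remarks that any bound of the form $\ll S_{1}(f)^{2}e^{\beta t}$ with $\beta<0$ suffices for its purposes, so the precise degree of the Sobolev norm is immaterial.
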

\begin{proof}
This assertion is proved in \cite[Theorem \S 9.1.2]{VenkateshSED}
with an explicit Sobolev norm or in \cite[page 216]{HoweTanBook}.
In fact, for our argument any bound of the form $\ll S_{1}(f)^{2}e^{t\beta}$
for some $\beta<0$ will suffice.\end{proof}
\begin{prop}
\label{prop:estimating f_T^2}For any real valued $f\in C_{0}^{\infty}(X)$
we have 
\[
\mu_{X}\pa{\av{f_{T}}^{2}}\ll\frac{S_{1}(f)^{2}}{T}
\]
where $S_{1}$ is the same as in Lemma \ref{lem:Mixing betta}.\end{prop}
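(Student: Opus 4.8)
The plan is to expand the square $\av{f_T}^2$ as a double integral and reduce everything to the effective-mixing bound of Lemma~\ref{lem:Mixing betta}. Since $f$ is real valued we have
\[
f_T(x)^2 = \frac{1}{T^2}\int_0^T\int_0^T f(a_s.x)f(a_t.x)\,ds\,dt.
\]
Integrating in $x$ against the $a_t$-invariant probability measure $\mu_X$ and using invariance to recenter, each inner integrand becomes a correlation $\langle f, a_{t-s}\circ f\rangle$ (or its translate), so that
\[
\mu_X\pa{\av{f_T}^2} = \frac{1}{T^2}\int_0^T\int_0^T \langle f, a_{t-s}\circ f\rangle\,ds\,dt.
\]

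Next I would change variables to $u = t-s$ and collapse the double integral into a single integral over $u\in[-T,T]$ weighted by the length $T-\av{u}$ of the overlap, giving
\[
\mu_X\pa{\av{f_T}^2} = \frac{1}{T^2}\int_{-T}^{T}(T-\av{u})\,\langle f, a_u\circ f\rangle\,du
\le \frac{1}{T}\int_{-T}^{T}\av{\langle f, a_u\circ f\rangle}\,du.
\]
Now I apply Lemma~\ref{lem:Mixing betta}: taking any admissible $\epsilon$ so that $\beta := (\theta-\tfrac12)+\epsilon < 0$, the correlation is bounded by $S_1(f)^2 e^{\av{u}\beta}$ (the absolute value makes the decay symmetric in $u$). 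Hence
\[
\int_{-T}^{T}\av{\langle f, a_u\circ f\rangle}\,du \ll S_1(f)^2\int_{-\infty}^{\infty} e^{\av{u}\beta}\,du = \frac{2}{\av{\beta}}\,S_1(f)^2,
\]
a finite constant independent of $T$. Combining with the prefactor $\tfrac1T$ yields $\mu_X\pa{\av{f_T}^2}\ll S_1(f)^2/T$, as claimed.

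The only genuine subtlety — rather than an obstacle — is ensuring the use of Lemma~\ref{lem:Mixing betta} is legitimate: the lemma is stated for $f\in C_0^\infty(X)$, so I should confirm at the outset that $f$ lies in this class (i.e. $\mu_X(f)=0$), which is exactly the hypothesis; the self-adjointness of the flow ($\langle f, a_{-u}\circ f\rangle = \langle a_u\circ f, f\rangle$) then justifies treating positive and negative $u$ symmetrically. Everything else is a routine Fubini and change-of-variables computation, and the implied constant absorbs the factor $2/\av{\beta}$, which depends only on the fixed spectral gap $\theta$ and the choice of $\epsilon$, not on $f$ or $T$.
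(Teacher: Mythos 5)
Your proof is correct and follows essentially the same route as the paper: both expand $\mu_{X}\pa{\av{f_{T}}^{2}}$ into the double correlation integral $\frac{1}{T^{2}}\int_{0}^{T}\int_{0}^{T}\langle f,f\circ a_{t-s}\rangle\, ds\, dt$, exploit the symmetry in $t-s$ coming from $f$ being real valued and the flow unitary, and then integrate the exponential decay supplied by Lemma~\ref{lem:Mixing betta}. The only cosmetic difference is bookkeeping: the paper restricts to the triangle $0\leq s\leq t\leq T$ and evaluates the resulting integral exactly as $\frac{1}{\beta^{2}}\pa{e^{\beta T}-1-\beta T}$, whereas you change variables to $u=t-s$, bound the overlap factor $T-\av{u}$ by $T$, and integrate $e^{\beta\av{u}}$ over all of $\bR$ --- both yield the same $\ll S_{1}(f)^{2}/T$ bound.
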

\begin{proof}
We have
\[
\mu_{X}\pa{\av{f_{T}}^{2}}=\int\limits _{X}\av{f_{T}}^{2}d\mu_{X}=\langle f_{T},f_{T}\rangle=\frac{1}{T^{2}}\int\limits _{0\leq t,s\leq T}\langle f,f\circ a_{t-s}\rangle dsdt={\rm (*)}.
\]
Let $B=\set{(t,s):0\leq t\leq T,0\leq s\leq t}$ and note that since
$f$ is real-valued and $a_{t-s}\circ$ is a unitary operator, we
have $\langle f,f\circ a_{t-s}\rangle=\langle a_{s-t}\circ f,f\rangle=\langle f,f\circ a_{s-t}\rangle$
which implies that
\[
{\rm (*)}=\frac{2}{T^{2}}\int\limits _{B}\langle f,f\circ a_{t-s}\rangle dsdt\leq\frac{2}{T^{2}}\int\limits _{B}\av{\langle f,f\circ a_{t-s}\rangle}dsdt\ll\frac{S_{1}(f)^{2}}{T^{2}}\int\limits _{0}^{T}\int\limits _{0}^{t}e^{\beta\pa{t-s}}dsdt
\]
where $\beta:=\frac{\theta-\frac{1}{2}}{2}<0$. A direct computation
yields:

\[
\int\limits _{0}^{T}\int\limits _{0}^{t}e^{\beta\pa{t-s}}dsdt=\frac{1}{\beta}\int\limits _{0}^{T}\pa{e^{\beta t}-1}dt=\frac{1}{\beta^{2}}\pa{e^{\beta T}-1-\beta T}
\]
and recalling that $\beta<0$ we have reached the claim above:
\[
{\rm (*)}\ll\frac{S_{1}(f)^{2}}{\av{\beta}T}.
\]

\end{proof}

\subsection{Effective Duke's Theorem}

The following theorem is now known as Duke's Theorem \cite{Duke88}:
\begin{thm}
\label{thm:effective duke}There exist $ $a $\gamma>0$ and a sobolev
norm $S_{2}$, such that for any $f\in C_{c}^{\infty}(X)$ we have
\[
\av{\mu_{d}(f)-\mu_{X}(f)}\leq S_{2}(f)d^{-\gamma}
\]
where $S_{2}(f)$ is the Sobolev norm on $C_{c}^{\infty}(X)$ (whose
further properties are discussed below). 
\end{thm}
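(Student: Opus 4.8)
The plan is to prove effective equidistribution of $\cG_d$ by the classical spectral route of Duke and Iwaniec: expand $f$ along the automorphic spectrum of $X=\Gamma\backslash G$, reduce the discrepancy $\mu_d(f)-\mu_X(f)$ to a spectrally weighted sum of Weyl sums (normalized geodesic period integrals), identify each such Weyl sum with a Fourier coefficient of a half-integral weight automorphic form via the Shimura correspondence, and finally invoke the subconvex bound on those coefficients, which supplies the power saving $d^{-\gamma}$. First I would use that both $\mu_d$ and $\mu_X$ are probability measures, so that $\mu_d(1)=\mu_X(1)$ and the constant part of $f$ contributes $\mu_X(f)$ exactly; it therefore suffices to bound $\av{\mu_d(\varphi)}$ for $\varphi$ ranging over an orthonormal basis of the orthogonal complement of the constants in $L^2(X)$. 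Decomposing $L^2(X)$ as a unitary $G$-representation, such $\varphi$ fall into three families: vectors attached to Maass cusp forms (even and odd principal series), vectors attached to holomorphic cusp forms of each weight (the discrete series), and Eisenstein vectors (the continuous spectrum). For each fixed $\varphi$ one has $\mu_d(\varphi)=\frac{1}{\av{\cG_d}}\sum_{\sigma\in\cG_d}\int_\sigma\varphi\,ds$, a sum of cycle integrals of $\varphi$ over the closed geodesics, normalized by $\av{\cG_d}$.

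Next I would bring in the arithmetic heart of the argument. By the work of Maass, Shintani and Katok--Sarnak, the sum of cycle integrals over the full collection $\cG_d$ (which is a ${\rm Pic}(\cO_d)$-torsor) equals, up to explicit gamma-factors and an elementary normalization, the $d$-th Fourier coefficient $c_\varphi(d)$ of a weight $1/2$ (respectively $1/2+k$ in the holomorphic weight-$k$ case) form lying in the Shimura preimage of $\varphi$. The trivial bound $c_\varphi(d)\ll_\varphi d^{1/4}$ together with $\av{\cG_d}=d^{1/2+o(1)}$ already gives $\av{\mu_d(\varphi)}\ll_\varphi d^{-1/4+o(1)}$ for a single fixed $\varphi$; the point is to make this a genuine, uniform power saving.

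The genuine saving comes from the subconvex estimate $c_\varphi(d)\ll_\varphi d^{1/4-\delta}$ for some absolute $\delta>0$, due to Iwaniec in the Maass case and to Duke \cite{Duke88} in general via Waldspurger's formula and subconvexity of the associated $L$-function. The Eisenstein (continuous) contribution is handled separately by classical bounds on the relevant Dirichlet and Estermann $L$-functions, and the constant term is the main term $\mu_X(f)$. These inputs yield $\av{\mu_d(\varphi)}\ll_\varphi d^{-\gamma}$ with $\gamma=\delta$ independent of $\varphi$. This third step is the one genuinely deep ingredient, and I expect it to be the main obstacle: everything else is spectral unfolding, while the subconvex (Iwaniec-type) bound on half-integral weight Fourier coefficients is precisely where the power saving is created.

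Finally I would reassemble the estimate. Writing $f=\sum_\varphi\langle f,\varphi\rangle\,\varphi$ and tracking the dependence of the implied constants on the spectral parameter of $\varphi$, the per-form bounds combine into a spectral sum in which a Sobolev norm $S_2(f)$ of sufficiently high order simultaneously absorbs the coefficients $\langle f,\varphi\rangle$ and the polynomial growth of the gamma-factor and period normalizations, forcing convergence. This produces $\av{\mu_d(f)-\mu_X(f)}\le S_2(f)\,d^{-\gamma}$, as claimed; the remaining bookkeeping is the verification that $S_2$ can be chosen once and for all, independently of $f$ and $d$, which is exactly the uniformity property of $S_2$ referred to in the statement. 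In the present note this theorem is invoked as a black box; the sketch above indicates the structure by which the single exponent $\gamma>0$ arises.
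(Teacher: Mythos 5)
Your outline is essentially the same argument the paper relies on: the paper gives no proof of Theorem \ref{thm:effective duke} at all, invoking it as a black box via \cite[Theorem 4.6]{EMLV11} and \cite{VenkateshSED} (ultimately \cite{Duke88}), and those references establish it precisely by your route --- spectral decomposition into Maass, holomorphic and Eisenstein components, Weyl sums as geodesic periods, the Maass--Shintani--Katok--Sarnak correspondence to half-integral weight Fourier coefficients, Iwaniec/Duke subconvexity for the power saving, and polynomial dependence on the spectral parameter absorbed into the Sobolev norm $S_{2}$. Two cosmetic corrections: the Weyl sums must be normalized by the total length $l(\cG_{d})=d^{\frac{1}{2}+o(1)}$ rather than by $\av{\cG_{d}}=h(d)$, which need not be $d^{\frac{1}{2}+o(1)}$ when the regulator is large (your two slips cancel, since you also wrote unnormalized line integrals), and since the theorem is stated for arbitrary non-square discriminants, one needs the extension of the period formula and subconvex input to non-maximal orders $\cO_{d}$, which is exactly what the treatment in \cite{EMLV11} supplies.
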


As we want to apply Theorem \ref{thm:effective duke} to $\av{f_{T}}^{2}$,
we need to bound the growth rate of $S_{2}(\av{f_{T}}^{2})$. To this
end, note that 
\begin{enumerate}
\item There exists another sobolev norm $S_{3}$ such that for any $f_{1},f_{2}\in C_{0}^{\infty}(X)\oplus\bC$
we have $S_{2}(f_{1}f_{2})\ll S_{3}(f_{1})S_{3}(f_{2})$.\label{enu:multiplic sobolev}
\item For any $f\in C_{0}^{\infty}(X)\oplus\bC$ and $g\in G$, $S_{3}(g.f)\ll\norm{g}^{\kappa}S_{3}(f)$
for some $\kappa>0$, where $ $$\norm{g}$ denotes the operator norm
$Ad(g^{-1}):{\rm Lie(G)\ra Lie(G)}$, $X\mapsto g^{-1}Xg$.\label{enu: g action} 
\end{enumerate}
For the proof of Theorem \ref{thm:effective duke} and the properties
of the above Sobolev norms we refer the reader to \cite[Theorem 4.6]{EMLV11}
and the references therein, in particular to \cite[\S 2.9 and \S 6]{VenkateshSED}. 

We thus have:
\begin{lem}
There exists an $\alpha>0$ such that for any $T>0$, and any $f\in C_{c}^{\infty}(X)\oplus\bC$
we have\label{Lem:square estimate} 
\[
S_{2}(\av{f_{T}}^{2})\ll S_{3}(f)^{2}e^{\alpha T}.
\]
\end{lem}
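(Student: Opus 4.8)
The goal is to bound the Sobolev norm $S_2(|f_T|^2)$, where $f_T(x) = \frac{1}{T}\int_0^T f(a_t.x)\,dt$. The plan is to combine the two properties of the Sobolev norms recorded just above the statement: the multiplicativity property~(\ref{enu:multiplic sobolev}), which gives $S_2(|f_T|^2) = S_2(f_T \cdot \overline{f_T}) \ll S_3(f_T)S_3(\overline{f_T})$, and the behaviour under the $G$-action~(\ref{enu: g action}), which controls how $S_3$ grows when we push $f$ by a group element $a_t$. Since $f_T$ is an average of the translates $a_t.f$ over $t \in [0,T]$, the main task is to transfer these two pointwise properties through the integral defining $f_T$.

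\begin{proof}
Write $f_T = \frac{1}{T}\int_0^T (a_t.f)\,dt$, so that $f_T$ is an average of the translates $a_t.f$ for $t\in[0,T]$. Applying property~(\ref{enu:multiplic sobolev}) to the factorization $|f_T|^2 = f_T\cdot\overline{f_T}$ gives a Sobolev norm $S_3$ with
\[
S_2(|f_T|^2)\ll S_3(f_T)\,S_3(\overline{f_T}).
\]
It therefore suffices to bound $S_3(f_T)$ (and symmetrically $S_3(\overline{f_T})$) by $S_3(f)e^{\tfrac{\alpha}{2}T}$ for a suitable $\alpha>0$. Since $S_3$ is a norm, the triangle inequality in integral form yields
\[
S_3(f_T)=S_3\Bigl(\tfrac{1}{T}\int_0^T (a_t.f)\,dt\Bigr)\leq\frac{1}{T}\int_0^T S_3(a_t.f)\,dt.
\]
Now property~(\ref{enu: g action}) gives $S_3(a_t.f)\ll\norm{a_t}^{\kappa}S_3(f)$ for some $\kappa>0$, where $\norm{a_t}$ is the operator norm of $\mathrm{Ad}(a_t^{-1})$ on $\mathrm{Lie}(G)$. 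A direct computation of this adjoint action on $\mathfrak{sl}_2(\bR)$ shows $\norm{a_t}\ll e^{|t|}$, so that $\norm{a_t}^{\kappa}\ll e^{\kappa t}$ for $t\in[0,T]$. Substituting and integrating over $[0,T]$ gives $S_3(f_T)\ll S_3(f)e^{\kappa T}$, and combining with the displayed bound for $S_2(|f_T|^2)$ produces the claimed estimate with $\alpha=2\kappa$.
\end{proof}

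The step I expect to require the most care is the passage of $S_3$ through the integral sign, i.e.\ the integral triangle inequality $S_3(\int_0^T a_t.f\,dt)\le\int_0^T S_3(a_t.f)\,dt$; one should check that $S_3$, being defined via $L^2$-norms of derivatives, is genuinely subadditive in this integrated sense and that the integrand is measurable, which follows from the continuity of $t\mapsto a_t.f$ in the relevant Sobolev topology. The remaining ingredient, the exponential bound $\norm{a_t}\ll e^{|t|}$, is a routine eigenvalue computation for $\mathrm{Ad}(a_t)$ on the three-dimensional Lie algebra $\mathfrak{sl}_2(\bR)$, whose eigenvalues are $e^{\pm t}$ and $1$.
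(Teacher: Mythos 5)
Your proof is correct and follows essentially the same route as the paper's: factor $\av{f_T}^2$ and apply the multiplicativity property, then pull $S_3$ through the averaging integral (your ``integral triangle inequality'' is exactly the paper's appeal to convexity of $S_3$ and Jensen's inequality) and invoke property~(\ref{enu: g action}) with the bound $\norm{a_t}\ll e^{\av{t}}$. Your explicit eigenvalue computation for $\mathrm{Ad}(a_t)$ on $\mathfrak{sl}_2(\bR)$ merely makes precise a step the paper leaves implicit, so there is nothing to add.
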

\begin{proof}
This readily follow from properties (\ref{enu:multiplic sobolev})
and (\ref{enu: g action}). Indeed, first use that that $S_{2}(\av{f_{T}}^{2})\ll S_{3}(f_{T})S_{3}(f_{T})$.
Further, by the convexity of the norm $S_{3}$ and Jensen's inequality,
we have 
\[
S_{3}(f_{T})\leq\frac{1}{T}\int_{0}^{T}S_{3}(f\circ a_{t})dt\ll\frac{S_{3}(f)}{T}\int_{0}^{T}e^{\kappa t}dt\ll S_{3}(f)e^{\kappa T},
\]
and the lemma follows.
\end{proof}

\section{Proof of Theorem \ref{thm:subcollection Duke}}

For simplicity, we write $I_{k}=I_{d_{k}}$ and $\mu_{k}=\mu_{I_{k}}$.
Fix $f\in C_{c}^{\infty}(X)\oplus\bC$ and set $c=\mu_{X}(f)$ and
by abuse of notation, let $c$ also denote the constant function $c\cdot1_{X}$.
As we aim to estimate $\av{\mu_{k}(f)-c}$, note first that 
\[
\mu_{k}(f)-c=\mu_{k}(f-c)=\mu_{k}\pa{\pa{f-c}_{T}}
\]
where the first equality follows since $\mu_{k}$ is a probability
measure and the second since $\mu_{k}$ is supported on closed geodesics. 

By the Cauchy-Schwarz inequality, we have 
\begin{eqnarray}
\pa{\frac{l(I_{k})}{l(\cG_{d_{k}})}\mu_{k}\pa{\pa{f-c}_{T}}}^{2}\leq\pa{\frac{l(I_{k})}{l(\cG_{d_{k}})}}^{2} & {\mu_{k}\pa{1_{X}^{2}}}\cdot{\mu_{k}\pa{\av{\pa{f-c}_{T}}^{2}}}\label{eq:first}\\
\leq\pa{\frac{l(I_{k})}{l(\cG_{d_{k}})}}^{2}\mu_{k}\pa{\av{\pa{f-c}_{T}}^{2}} & \leq\frac{l(I_{k})}{l(\cG_{d_{k}})}\mu_{d_{k}}\pa{\av{\pa{f-c}_{T}}^{2}} & =\pa{*}
\end{eqnarray}
where the last inequality follows since the positivity of $\av{\pa{f-c}_{T}}^{2}$
implies that 
\[
\frac{l(I_{k})}{l(\cG_{d_{k}})}\mu_{k}\pa{\av{\pa{f-c}_{T}}^{2}}\leq\mu_{d_{k}}\pa{\av{\pa{f-c}_{T}}^{2}}.
\]
Now we apply Theorem \ref{thm:effective duke}. Note that $\av{\pa{f-c}_{T}}^{2}$
does not have compact support, but it is eventually constant since
$\av{\pa{f-c}_{T}}^{2}-c^{2}$ has compact support. Noting that $\mu_{d}$
and $\mu_{X}$ are probability measures, we can apply Theorem \ref{thm:effective duke}
to estimate $\mu_{d_{k}}\pa{\av{\pa{f-c}_{T}}^{2}}$ and get that
\begin{equation}
\pa{*}\leq\frac{l(I_{k})}{l(\cG_{d_{k}})}\pa{\mu_{X}\pa{\av{\pa{f-c}_{T}}^{2}}+d_{k}^{-\gamma}S_{2}\pa{\av{\pa{f-c}_{T}}^{2}-c^{2}}}=\pa{**}.\label{eq:second}
\end{equation}
Note that $S_{2}\pa{\av{\pa{f-c}_{T}}^{2}-c^{2}}\ll S_{2}\pa{\av{\pa{f-c}_{T}}^{2}}+\norm{f}_{\infty}^{2}$.
Now, as $f-c$ has mean zero we can apply $ $Proposition \ref{prop:estimating f_T^2}
to estimate $\mu_{X}\pa{\av{\pa{f-c}_{T}}^{2}}$ and Lemma \ref{Lem:square estimate}
to bound $S_{2}\pa{\av{\pa{f-c}_{T}}^{2}}$, in order to get 

\[
\pa{**}\ll_{f}\frac{l(I_{k})}{l(\cG_{d_{k}})}\pa{S_{1}(f-c)^{2}T^{-1}+S_{3}(f-c)^{2}d_{k}^{-\gamma}e^{\alpha T}+d_{k}^{-\gamma}\norm{f}_{\infty}^{2}}.
\]
Putting all of the above together and choosing $T=\eta\log(d_{k})$,
we have 
\begin{equation}
\frac{l(I_{k})}{l(\cG_{d_{k}})}\pa{\mu_{k}\pa{f}-\mu_{X}\pa{f}}^{2}\ll_{f}S_{1}(f-c)^{2}\eta^{-1}\log(d_{k})^{-1}+S_{3}(f-c)^{2}d_{k}^{\eta\alpha-\gamma}+d_{k}^{-\gamma}\norm{f}_{\infty}^{2}.\label{eq:last estimate}
\end{equation}
Choosing $\eta<\frac{\gamma}{\alpha}$ and multiplying both sides
by $\varphi_{\mathcal{I}}(k)=\frac{l(\cG_{d_{k}})}{l(I_{k})}$, we
get with $\psi(k)=\frac{\varphi_{\mathcal{I}}(k)}{\log(d_{k})}$ that
\[
\pa{\mu_{k}\pa{f}-\mu_{X}\pa{f}}^{2}\ll_{f}\psi(k)
\]
as claimed.

\section{Large but non-equidistributing subcollections\label{sec:Elon's-construction}}

Recall that $\cO_{d}:=\bZ[\frac{d+\sqrt{d}}{2}]$, the unique order
of discriminant $d$. The following construction was outlined to us
by E. Lindenstrauss:
\begin{thm}
\label{thm:Elon's example}Let $\set{d_{k}}_{k\in K}\nearrow\infty$
be any sequence with ${\rm Reg}(\cO_{d_{k}})\ll\log(d_{k})$. Given
$a>0$ and a fixed periodic orbit $P$, there exist subcollections
$I_{d_{k}}\subset\cG_{d_{k}}$ with $l(I_{d_{k}})\gg d_{k}^{\frac{1}{2}-a}$
such that any weak-{*} limit of a subsequence of $\mu_{I_{d_{k}}}$
gives a positive mass to $P$ and in particular, the sequence $\set{\mu_{I_{d_{k}}}}_{k\in\bN}$
does not equidistribute.\end{thm}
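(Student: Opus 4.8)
The plan is to build $I_{d_k}$ out of the closed geodesics in $\cG_{d_k}$ that pass extremely close to a fixed point of $P$. The hypothesis ${\rm Reg}(\cO_{d_k})\ll\log(d_k)$ enters twice: through the Lipschitz (shadowing) behaviour of the flow it forces such a geodesic to stay near $P$ for a fixed proportion of its length, and because it keeps the relevant radius polynomially large it lets the effective Duke's Theorem \ref{thm:effective duke} guarantee that enough geodesics qualify. Fix $x_0\in P$ and write $L_k:={\rm Reg}(\cO_{d_k})$, so $L_k=l(\phi)$ for every $\phi\in\cG_{d_k}$ and $L_k\le c_1\log(d_k)$ for a constant $c_1$. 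Fix a proportion $\beta\in\pa{0,\tfrac12}$ and a radius $\delta_k:=d_k^{-\rho}$ with a small $\rho>0$, and put $\epsilon_k:=c_0\,\delta_k\,e^{-\lambda\beta L_k}$, where $c_0,\lambda>0$ are constants for which $d_X(a_t.x,a_t.y)\le c_0^{-1}e^{\lambda\av t}d_X(x,y)$ holds for $d_X(x,y)$ small (the standard Lipschitz estimate for the flow, coming from the left-invariance of the metric and $\norm{a_t}\asymp e^{\av t}$). Writing $B(x_0,r):=\set{y\in X:d_X(y,x_0)<r}$, define $I_{d_k}:=\set{\phi\in\cG_{d_k}:\phi\cap B(x_0,\epsilon_k)\neq\emptyset}$.

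\emph{Geometric step.} First I would show that every $\phi\in I_{d_k}$ spends proportion at least $2\beta$ of its length in the $\delta_k$-neighbourhood $B(P,\delta_k)$ of $P$. Pick $y_\phi\in\phi$ with $d_X(y_\phi,x_0)<\epsilon_k$. Since $P$ is a single $a_t$-orbit, $a_t.x_0\in P$, and the local estimate gives $d_X(a_t.y_\phi,P)\le d_X(a_t.y_\phi,a_t.x_0)\le\delta_k\,e^{\lambda\pa{\av t-\beta L_k}}\le\delta_k$ for all $\av t\le\beta L_k$. Hence the arc $\set{a_t.y_\phi:\av t\le\beta L_k}$, of length $2\beta L_k\le L_k$, lies in $B(P,\delta_k)$, so $\mu_\phi\pa{B(P,\delta_k)}\ge2\beta$ and therefore $\mu_{I_{d_k}}\pa{B(P,\delta_k)}\ge2\beta$. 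The key point is that this proportion is independent of $\delta_k$.

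\emph{Counting step (the main obstacle).} Next I would bound $l(I_{d_k})$ from below using Theorem \ref{thm:effective duke}. Choose $g_k\in C_c^\infty(X)$ with $1_{B(x_0,\epsilon_k/2)}\le g_k\le1_{B(x_0,\epsilon_k)}$ and $S_2(g_k)\ll\epsilon_k^{-N}$ for a fixed $N$; since $X$ is three-dimensional and $x_0$ is fixed, $\mu_X(g_k)\gg\epsilon_k^3$ for small $\epsilon_k$. Theorem \ref{thm:effective duke} then gives
\[
\mu_{d_k}(g_k)\ge\mu_X(g_k)-S_2(g_k)d_k^{-\gamma}\gg\epsilon_k^3-C\epsilon_k^{-N}d_k^{-\gamma}.
\]
Here the hypothesis is crucial: from $L_k\le c_1\log(d_k)$ we get $\epsilon_k\ge c_0\,d_k^{-\sigma}$ with $\sigma:=\rho+\lambda\beta c_1$, a \emph{polynomial} lower bound, so choosing $\rho,\beta$ so small that $\sigma<\frac{\gamma}{N+3}$ makes the first term dominate and yields $\mu_{d_k}(g_k)\gg\epsilon_k^3$. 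Finally, since $0\le g_k\le1$ and only geodesics meeting $B(x_0,\epsilon_k)$ (that is, those in $I_{d_k}$) contribute,
\[
l(I_{d_k})=\av{I_{d_k}}L_k\ge\sum_{\phi\in\cG_{d_k}}l(\phi)\,\mu_\phi(g_k)=l(\cG_{d_k})\,\mu_{d_k}(g_k)\gg\epsilon_k^3\,d_k^{\frac12+o(1)}\gg d_k^{\frac12-3\sigma+o(1)}.
\]
Imposing in addition $3\sigma<a$ gives $l(I_{d_k})\gg d_k^{\frac12-a}$ for all large $k$. The delicate part of the whole argument is exactly this balancing of the Sobolev-norm growth $\epsilon_k^{-N}$ of the bump on a shrinking ball against the $d_k^{-\gamma}$ saving, which only closes because ${\rm Reg}(\cO_{d_k})\ll\log(d_k)$ keeps $\epsilon_k$ polynomially large.

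\emph{Conclusion.} Let $\nu$ be any weak-{*} limit of a subsequence of $\set{\mu_{I_{d_k}}}$, and let $U\supset P$ be open. Choosing $\chi\in C_c(X)$ with $0\le\chi\le1$, ${\rm supp}(\chi)\subset U$ and $\chi\equiv1$ on a neighbourhood of $P$, we have $\chi\ge1_{B(P,\delta_k)}$ for large $k$, so the geometric step gives $\nu(U)\ge\nu(\chi)=\lim_k\mu_{I_{d_k}}(\chi)\ge2\beta$. Since $P$ is compact, letting $U\downarrow P$ yields $\nu(P)\ge2\beta>0=\mu_X(P)$. Hence no subsequence of $\set{\mu_{I_{d_k}}}$ converges to $\mu_X$, i.e. the sequence does not equidistribute.
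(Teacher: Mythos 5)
Your proposal is correct and takes essentially the same route as the paper's proof: a bump function near $P$ on a polynomially shrinking scale fed into the effective Duke's Theorem (Theorem \ref{thm:effective duke}), where the hypothesis ${\rm Reg}(\cO_{d_{k}})\ll\log(d_{k})$ keeps the radius polynomial in $d_{k}$ so the main term beats the Sobolev-norm loss, combined with the $a_{t}$-conjugation expansion estimate (the paper's Lemma \ref{lem:Epsilon keeps around}) to show that every captured geodesic spends a definite proportion of its length near $P$. The only cosmetic differences are that you use a ball around a single point of $P$ instead of the paper's tube $U_{r}$ (Lemma \ref{lem:bump functions}) and track a fixed proportion $2\beta$ rather than the paper's $\eta/c_{1}$.
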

\begin{rem}
Such sequences of discriminants do exist and even exist in any given
fixed real quadratic field (see e.g. \cite[\S 6 ]{McM2007}). 
\end{rem}
Let $P$ be a periodic orbit and note that since $P$ is compact,
it has a uniform injectivity radius which we denote by ${\rm inj}(P)$.
For any $r<{\rm inj}(P)$ we let $U_{r}=\set{x\in X:d_{X}(x,P)<r}$. 
\begin{lem}
\label{lem:Epsilon keeps around} For any small enough $r>0$ and
$y\in U_{r}$ there exists an interval $I$ of length $\asymp-\log(r)$
such that for any $t\in I$, we have $a_{t}.y\in U_{r^{\frac{1}{2}}}$.\end{lem}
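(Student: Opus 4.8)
The plan is to use the hyperbolicity of the geodesic flow transverse to the closed orbit $P$: a displacement of size $r$ off $P$ is stretched at rate $e^{t}$ in the unstable horocyclic direction and contracted at rate $e^{-t}$ in the stable one, so it survives below size $r^{1/2}$ for a time window of length $\asymp\log\frac1r$. To make this precise I would first fix $y\in U_r$ and choose $p\in P$ with $d_X(y,P)=d_X(y,p)<r$. As $r<{\rm inj}(P)$, I can lift faithfully to $G$: pick representatives $\tilde p,\tilde y\in G$ with $d_G(\tilde p,\tilde y)=d_X(y,p)<r$ and set $h=\tilde p^{-1}\tilde y$, so that by left-invariance $d_G(e,h)<r$. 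For $r$ small this $h$ admits a local product decomposition
\[
h=u^-_s\,a_\tau\,u^+_u,\qquad u^-_s=\pa{\begin{array}{cc}1&0\\s&1\end{array}},\quad u^+_u=\pa{\begin{array}{cc}1&u\\0&1\end{array}},
\]
into a stable horocyclic element, a flow element, and an unstable horocyclic element, with $\av s+\av\tau+\av u\ll d_G(e,h)\ll r$.

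The second step is the flow computation. Since $a_t.y=\Gamma\tilde p\,h\,a_{-t}$ and the points of $P$ are $a_{s'}.p=\Gamma\tilde p\,a_{-s'}$, I would compare $a_t.y$ with the orbit point $a_{t-\tau}.p$; the shift by $\tau$ is chosen precisely to cancel the neutral component $a_\tau$, which merely slides us along $P$ and hence costs nothing in the distance to $P$. Using the conjugation rules $a_\alpha u^-_\beta a_{-\alpha}=u^-_{e^{-\alpha}\beta}$ and $a_\alpha u^+_\beta a_{-\alpha}=u^+_{e^{\alpha}\beta}$ together with left-invariance of $d_G$, this gives the exact identity and bound
\[
d_X(a_t.y,P)\le d_G\pa{e,\;a_{t-\tau}\,h\,a_{-t}}=d_G\pa{e,\;u^-_{e^{\tau-t}s}\,u^+_{e^{t}u}}\ll e^{-t}\av s+e^{t}\av u\ll e^{\av t}\,r,
\]
where the penultimate step uses $d_G(e,u^-_a u^+_b)\asymp\av a+\av b$ for $a,b$ small.

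Finally I would take the symmetric interval $I=\left[-\tfrac12\log\tfrac1r+C,\ \tfrac12\log\tfrac1r-C\right]$ with $C$ a constant absorbing the implied constants above. For $t\in I$ one has $e^{\av t}\ll r^{-1/2}$, so the displayed bound yields $d_X(a_t.y,P)\ll e^{\av t}r\ll r^{1/2}$, and with $C$ chosen to beat the implied constant this is $<r^{1/2}$, i.e. $a_t.y\in U_{r^{1/2}}$; the length of $I$ is $\log\frac1r-2C\asymp-\log r$, as claimed. Here I need $r$ small enough that also $r^{1/2}<{\rm inj}(P)$, so that $U_{r^{1/2}}$ is an honest tube and the bound is meaningful. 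I expect the only real friction, and the main though mild obstacle, to be the careful passage between the quotient metric $d_X$ and the left-invariant metric $d_G$ through the injectivity radius, and the comparability of $d_G$ near $e$ with the horocyclic coordinates; the dynamical heart of the argument is just the exact $e^{\pm t}$ contraction and expansion under $a_t$ and needs no linearization.
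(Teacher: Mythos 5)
Your proof is correct and takes essentially the same route as the paper's: both lift the displacement $h$ of size $r$ to $G$, conjugate it by $a_t$, and exploit the exact $e^{\pm t}$ expansion/contraction rates on the two horocyclic directions to get a window of length $\asymp-\log r$ during which the displacement stays below $r^{1/2}$. The only cosmetic difference is that you decompose $h$ in local $N^{-}AN^{+}$ coordinates and slide along $P$ by $a_{\tau}$ to absorb the neutral component, whereas the paper works directly with the matrix entries of $h$, where the near-identity diagonal part is invariant under conjugation by $a_t$ and so needs no sliding.
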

\begin{proof}
Let $x\in P$ such that $d_{X}(x,y)<r$ for some $r$ that will be
determined momentarily. Denote $x=\Gamma g_{1},y=\Gamma g_{2}$ and
$h=\left(\begin{array}{cc}
a & b\\
c & d
\end{array}\right)\in G$ with $\Gamma g_{1}h=\Gamma g_{2}$. Fix a norm on $M_{2\times2}(\bR)$,
say $\norm{\left(\begin{array}{cc}
a & b\\
c & d
\end{array}\right)}={\rm max}\pa{\av{a},\av{b},\av{c},\av{d}}$ and restrict it to $G$. We know that the resulting metric $d_{\norm{}}$
on $G$ is bi-Lipschitz equivalent to $d_{G}$ (say $\kappa d_{\norm{}}\leq d_{G}\leq Kd_{\norm{}}$),
and for any $z=\Gamma g_{0}\in P$ and $r<\min\pa{{\rm inj}(P),1}$
the projection $B_{r}^{G}(e)\ra B_{r}^{X}(x),\,\, g\mapsto\Gamma g_{0}g$
is an isometry between $d_{G}$ and $d_{X}$. Thus by assumption $\norm{h}\leq\frac{r}{\kappa}$
and since $a_{t}.y=\Gamma g_{1}a_{t}^{-1}a_{t}ha_{t}^{-1}=a_{t}.x\pa{a_{t}ha_{t}^{-1}}$
we have $d_{X}(a_{t}.x,a_{t}.y)\leq K\norm{a_{t}ha_{t}^{-1}}$. As
$a_{t}.x\in P$ we have to show that there is an interval $I$ of
length $\asymp-\log(r)$ such that $t\in I$ implies 
\begin{equation}
\norm{a_{t}ha_{t}^{-1}}\leq\frac{r^{\frac{1}{2}}}{K}.\label{eq: norm of the conj}
\end{equation}
Since $a_{t}ha_{t}^{-1}=\left(\begin{array}{cc}
a & be^{t}\\
e^{-t}c & d
\end{array}\right)$, and by assumption $\av{a-1},\av{d-1},\av{b},\av{c}\leq\frac{r}{\kappa}\ll\frac{r^{\frac{1}{2}}}{K}$
(where the last inequality holds for small enough $r$), (\ref{eq: norm of the conj})
amounts to $e^{t}\av{b}\leq e^{t}\frac{r}{\kappa}\leq\frac{r^{\frac{1}{2}}}{K}$
and $e^{-t}\av{c}\leq e^{-t}\frac{r}{\kappa}\leq\frac{r^{\frac{1}{2}}}{K}$.
Thus, for small enough $r$, we have $d_{X}(a_{t}.x,a_{t}.y)\leq r^{\frac{1}{2}}$
if and only if
\[
e^{t}\leq\frac{\kappa r^{-\frac{1}{2}}}{K}\text{ and }e^{-t}\leq\frac{\kappa r^{-\frac{1}{2}}}{K}
\]
if and only if $t\leq\log(\frac{\kappa}{K})-\frac{1}{2}\log(r)$ and
$-t\leq\log(\frac{\kappa}{K})-\frac{1}{2}\log(r)$. As for small enough
$r$, $I=[-\log(\frac{\kappa}{K})+\frac{1}{2}\log(r),\log(\frac{\kappa}{K})-\frac{1}{2}\log(r)]$
has length $\asymp-\log(r)$ we are done.
\end{proof}

\begin{lem}
\label{lem:bump functions}Let $P$ be a fixed closed geodesic and
$0\leq r\leq{\rm min(1,}{\rm inj}(P))$. There exists a function $f_{r}=f(r,P)$
such that
\begin{enumerate}
\item $\forall x\in X$, $0\leq f_{r}(x)\leq1$, 
\item ${\rm supp}(f_{r})\subset U_{r}$, $f_{r}|_{P}\equiv\text{1 }$. 
\item $S_{2}(f_{r})\ll r^{-b}$ for some $b>0$ (where $S_{2}$ is as in
Theorem \ref{thm:effective duke}),
\item $\mu_{X}(f_{r}):=\int f_{r}d\mu_{X}\asymp r^{c}$ for some $c>0$.
\end{enumerate}
\end{lem}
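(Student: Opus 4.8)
The plan is to realize $f_{r}$ as a rescaled, fixed bump function in coordinates adapted to a tubular neighborhood of $P$. First I would fix a parametrization of a neighborhood of $P$: writing $P=\set{\Gamma g_{0}a_{s}:s\in\bR}$ and letting $\fg=\lie(G)$ decompose as $\RR H\oplus\RR U\oplus\RR V$, where $H$ generates $a_{t}$ and $\RR U\oplus\RR V$ is the complementary (off-diagonal) space, every point sufficiently close to $P$ is uniquely of the form $\Gamma g_{0}\exp(uU+vV)a_{s}$ with $(u,v)$ in a small ball and $s$ ranging over one period. Since $P$ is compact with injectivity radius ${\rm inj}(P)$, for $r<{\rm inj}(P)$ these coordinates give a diffeomorphism onto a neighborhood of $P$ comparable to $U_{r}$, and, exactly as in the proof of Lemma \ref{lem:Epsilon keeps around}, the transverse size $\norm{(u,v)}$ is bi-Lipschitz comparable to $d_{X}(\cdot,P)$.

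Next I would fix, once and for all, a smooth $\Psi:\RR^{2}\ra[0,1]$ with $\Psi\equiv1$ on the ball of radius $\frac{1}{2}$ and ${\rm supp}(\Psi)$ contained in the unit ball, and set $f_{r}(x)=\Psi\pa{(u,v)/(\lambda r)}$ in the above coordinates (and $f_{r}=0$ elsewhere), where $\lambda>0$ is a fixed constant chosen small enough that ${\rm supp}(f_{r})\subset U_{r}$. Properties (1) and (2) are then immediate: $0\leq f_{r}\leq1$ by the choice of $\Psi$, the support lies in $U_{r}$ by the choice of $\lambda$, and $f_{r}\equiv1$ on $P$ since $P$ is the locus $(u,v)=0$ and $\Psi(0)=1$. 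The function is globally smooth: on the region where $d_{X}(\cdot,P)$ is small it is identically $1$ (hence smooth across $P$, where the distance itself is merely Lipschitz), on $U_{r}\setminus P$ the coordinates are smooth, outside $U_{r}$ it vanishes, and all transitions are smooth because $\Psi$ is.

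For the Sobolev bound (3), recall that $S_{2}$ (as in Theorem \ref{thm:effective duke}) is built from the invariant differential operators of some fixed order $k$, i.e. from compositions of at most $k$ Lie derivatives in the directions $H,U,V$. Expressing these vector fields in the fixed coordinates $(u,v,s)$ yields smooth coefficient functions that are bounded, together with finitely many of their derivatives, on the fixed-size tubular neighborhood (here it is important that $P$ is compact and stays in a fixed compact part of $X$, away from the cusp). Applying such an operator to $f_{r}$ and using the chain rule, every derivative falling on the argument $(u,v)/(\lambda r)$ contributes a factor $(\lambda r)^{-1}$ times a bounded derivative of $\Psi$; hence $\av{Df_{r}}\ll r^{-k}$ uniformly in $x$ for every operator $D$ of order $\leq k$. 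If $S_{2}$ is an $L^{\infty}$-type norm this already gives $S_{2}(f_{r})\ll r^{-k}$; if it is $L^{2}$-based one multiplies by ${\rm vol}(U_{r})^{1/2}\asymp r$, which only improves the exponent. Either way (3) holds with, say, $b=k$.

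Finally, for (4), since $P$ is a fixed closed geodesic its tubular neighborhood of transverse radius $\rho$ has volume $\asymp\rho^{2}$ (one longitudinal direction of fixed length and two transverse directions of width $\rho$), so $\mu_{X}(U_{\rho})\asymp\rho^{2}$. As $0\leq f_{r}\leq1$ is supported in $U_{r}$ we get $\mu_{X}(f_{r})\leq\mu_{X}(U_{r})\ll r^{2}$, and as $f_{r}\equiv1$ on the region $\norm{(u,v)}\leq\lambda r/2$, which by the bi-Lipschitz comparison contains $U_{r'}$ for some $r'\asymp r$, we get $\mu_{X}(f_{r})\geq\mu_{X}(U_{r'})\gg r^{2}$; hence $\mu_{X}(f_{r})\asymp r^{2}$, giving (4) with $c=2$. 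The step I expect to demand the most care is the Sobolev estimate (3): one must pin down exactly how $S_{2}$ is defined (its order $k$ and whether it is $L^{2}$- or $L^{\infty}$-type) and verify that expressing the invariant operators in the fixed chart produces coefficients bounded uniformly as $r\ra0$, so that the chain-rule count of $r^{-1}$ factors is clean.
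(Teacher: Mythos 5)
Your overall strategy is the same as the paper's: realize $f_{r}$ as a fixed bump function rescaled by $r$ in coordinates adapted to a tubular neighborhood of the compact orbit $P$ (the paper takes $f_{r}(\Gamma g_{P}a_{t}u^{+}(s_{1})u^{-}(s_{2}))=\Theta(s_{1})\Theta(s_{2})$ with a one-variable bump $\Theta$ supported in $(-r,r)$), with properties (1), (2) read off from the bump, (3) from the chain-rule count of one factor $r^{-1}$ per derivative, and (4) from the volume of the tube. Your write-up is in fact more explicit than the paper's ``one easily checks'': you pin down $c=2$ (one longitudinal direction of fixed length $l(P)$, two transverse directions of width $\asymp r$), and your observation that the cusp weights in $S_{2}$ are harmless because the support stays in a fixed compact set is precisely the paper's remark that the construction ``takes place in the compact region of $X$''. (Incidentally, your normalized $\Psi$ with $\Psi(0)=1$ is cleaner than the paper's literal $\Theta(x)=\exp(-1/\pa{x-r}^{2})$, whose value at the center is $e^{-1/r^{2}}\neq1$, so the paper's formula needs the same normalization to satisfy $f_{r}|_{P}\equiv1$.)

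There is, however, one genuine error in your chart: you place the transverse coordinate \emph{before} the flow coordinate, parametrizing points as $\Gamma g_{0}\exp(uU+vV)a_{s}$. This parametrization is not periodic in $s$: since $\exp(uU+vV)a_{l(P)}=a_{l(P)}\exp\pa{e^{-l(P)}uU+e^{l(P)}vV}$ (up to the sign convention on the root spaces) and $\Gamma g_{0}a_{l(P)}=\Gamma g_{0}$, the point with coordinates $(u,v,s)$ coincides with the point with coordinates $\pa{e^{-l(P)}u,\, e^{l(P)}v,\, s-l(P)}$; the return map of your transverse slice is a nontrivial hyperbolic map, not the identity. Consequently the function $x\mapsto\Psi\pa{(u,v)/(\lambda r)}$, defined through the unique representative with $s\in[0,l(P))$, is \emph{discontinuous} across the seam $s=0$ (a radial $\Psi$ is not invariant under this monodromy), so your claim that ``the coordinates are smooth'' on $U_{r}\setminus P$ fails there and $S_{2}(f_{r})$ is not even finite for the literal construction. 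The fix is exactly the ordering the paper uses: parametrize as $\Gamma g_{0}a_{s}\exp(uU+vV)$, with the flow coordinate adjacent to the base point. Then $\Gamma g_{0}a_{s+l(P)}\exp(uU+vV)=\Gamma g_{0}a_{s}\exp(uU+vV)$ identically, the chart is genuinely $l(P)$-periodic with trivial monodromy, $\Psi\pa{(u,v)/(\lambda r)}$ glues smoothly, and the rest of your argument --- the bi-Lipschitz comparison of $\norm{(u,v)}$ with $d_{X}(\cdot,P)$ (with constants depending on the fixed $P$), the $r^{-1}$-per-derivative Sobolev count with chart coefficients bounded because $s$ runs over the compact interval $[0,l(P)]$, and the two-transverse-dimensions volume bound giving $c=2$ --- goes through verbatim.
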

\begin{proof}
As $0\leq r\leq{\rm inj}(P)$, this construction takes place in the
compact region of $X$. Therefore, to estimate the Sobolev norm any
standard Sobolev norm on $\bR^{3}$ will do. The most obvious construction
works. Namely, note that the set $\tilde{U}_{r}=P\cdot\set{u^{+}(s)u^{-}(s):\av{s}<r}$
is a subset of $U_{r}$, and we can use the standard bump function
$\Theta(x)=\exp(\frac{-1}{\pa{x-r}^{2}}$) on $(-r,r)$ to define
$f_{r}:\tilde{U}_{r}\ra\bR$ by $f_{r}(\Gamma g_{P}a_{t}u^{+}(s_{1})u^{+}(s_{2}))=\Theta(s_{1})\Theta(s_{2})$
where $ $$P=\set{\Gamma g_{P}a_{t}}_{0\leq t\leq l(P)}$ and $\av{s_{2}},\av{s_{2}}<r$.
One easily checks that $f_{r}$ has the desired properties.
\end{proof}

\begin{proof}[Proof of Theorem \ref{thm:Elon's example}]
 For simplicity we restrain from mentioning injectivity radius issues
any further as these may always be resolved by taking some variables
to be large/small enough.

Let $\gamma$ be as in Theorem \ref{thm:effective duke} and fix $a>0$
and a periodic orbit $P$. Let $\eta=\eta(a)>0$ that will be determined
later. Applying Theorem \ref{thm:effective duke} to the functions
$f_{k}:=f(d_{k}^{-\eta},P)$, which are provided by Lemma \ref{lem:bump functions},
we get that there exist $C_{1},C_{2}>0$ such that 
\[
\mu_{d_{k}}(f_{k})\geq\mu_{X}(f_{k})-d_{k}^{-\gamma}S_{2}(f_{k})\geq C_{1}d_{k}^{-c\eta}-C_{2}d_{k}^{-(\gamma-b\eta)}.
\]
Setting $\eta$ so small such that $\gamma-b\eta\geq c\eta$, i.e.
$\eta\geq\frac{\gamma}{b+c}$, we have
\[
\mu_{d_{k}}(f_{k})\gg d_{k}^{-c\eta}.
\]
For a closed geodesic $\phi$ and $f\in C_{c}^{\infty}(X)$ we set
$\phi(f)$ to be the line integral of $f$ along $\phi$. Recall that
$\mu_{d_{k}}(f)=l(\cG_{d_{k}})^{-1}\sum_{\phi\in\cG_{d_{k}}}\phi(f)$,
and note that since $0\leq f_{k}\leq1$, for any $\phi\in\cG_{d_{k}}$
and any $\epsilon>0$, we have $\phi(f)\leq{\rm Reg}(\cO_{d_{k}})\ll d_{k}^{\epsilon}$
since by assumption ${\rm Reg}(\cO_{d_{k}})\ll\log(d_{k})$. Therefore,
if we let $I_{d_{k}}$ denote the subcollection of all the elements
of $\cG_{d_{k}}$ that intersect the support of $f_{k}$, for any
$\epsilon>0$ we have 
\[
d_{k}^{-c\eta}\ll\mu_{d_{k}}(f_{d_{k}})\ll l(\cG_{d_{k}})^{-1}\av{I_{d_{k}}}d_{k}^{\epsilon}=d_{k}^{-\pa{\frac{1}{2}+o(1)-\epsilon}}\av{I_{d_{k}}}.
\]
Thus, by choosing $\epsilon(a)$ and $\eta(a)$ accordingly, we have
$\av{I_{d_{k}}}\gg d_{k}^{\pa{\frac{1}{2}+o(1)-\epsilon}-c\eta}\gg d_{k}^{\frac{1}{2}-a}$
.

Let $\nu$ be a weak-{*} limit of $\mu_{I_{d_{k}}}$ and we claim
that $\nu(P)>0$. It is enough to show that there exists a $C>0$
such that for any large enough $k$, and any small enough $r$ we
have $\mu_{I_{d_{k}}}(U_{r})>C$. Let $U^{k}=U_{d_{k}^{-\frac{\eta}{2}}}$
and as $d_{k}\nearrow\infty$ it is clear that $\cap_{k}U^{k}=P$.
Thus it is enough to verify that for any $k_{0}$, $\mu_{I_{d_{k}}}(U^{k_{0}})>C$
for $k\gg0$. Fix $k_{0}\in\bN$; for any $k\geq k_{0}$ any element
of $\phi\in I_{d_{k}}$ intersects ${\rm supp}(f_{k})$ by definition,
and therefore there is a point $x\in\phi$ with $d(x,P)\leq d_{k}^{-\eta}$.
By Lemma \ref{lem:Epsilon keeps around} there exists an interval
$I_{k}$ of length $\eta\log(d_{k})$ such that for any $t\in I_{k}$
we have $a_{t}.x\in\phi\cap U^{k}\subset\phi\cap U^{k_{0}}$. Since
the length of any element of $I_{d_{k}}$ is ${\rm Reg}(\cO_{d_{k}})\leq c_{1}\log(d_{k})$
for some constant $c_{1}$, any element of $I_{d_{k}}$ spends at
least $\frac{\eta}{c_{1}}$ of its length in $U_{k_{0}}$. It follows
that $\nu(U_{k_{0}})>\frac{\eta}{c_{1}}>0$ and so that $\nu(P)>0$
and the claim follows.
\end{proof}

\author{\bibliographystyle{plain}
\bibliography{partDuke}
}
\end{document}